\numberwithin{equation}{section} 
\begin{document}
\ID{E15-523} 
 \DATE{Final, 2013-xx-xx} 
 \PageNum{1}
 \Volume{2017}{}{37}{1} 
 \EditorNote{$^*$Received October 18, 2015; revised April 18, 2016.}
 \EditorNote{$^\dag$Corresponding author: Mohammed D. KASSIM.} 

\abovedisplayskip 6pt plus 2pt minus 2pt \belowdisplayskip 6pt
plus 2pt minus 2pt
\def\vsp{\vspace{1mm}}
\def\th#1{\vspace{1mm}\noindent{\bf #1}\quad}
\def\proof{\vspace{1mm}\indent{\bf Proof}\quad}
\def\no{\nonumber}
\newtheorem{conclusion}{Conclusion}[section]
\newenvironment{prof}[1][Proof]{\indent\textbf{#1}\quad }
{\hfill $\Box$\vspace{0.7mm}}
\def\q{\quad} \def\qq{\qquad}
\allowdisplaybreaks[4]


\AuthorMark{M. Kassim, K. Furati \& N.-e. Tatar}                             

\TitleMark{\uppercase{Non-existence for fractionally damped fdps}}  

\title{\uppercase{ Non-existence for fractionally damped fractional differential problems }        
\footnote{ }}                 
\author{\sl{Mohammed \uppercase{D. Kassim $^\dag$}}}    
   { King Fahd University of Petroleum and Minerals, Department of Mathematics and Statistics, Dhahran, 31261, Saudi Arabia\\
    E-mail\,$:$ dahan@kfupm.edu.sa}

\author{\sl{Khaled \uppercase{m. Furati}}}    
{ King Fahd University of Petroleum and Minerals, Department of Mathematics and Statistics, Dhahran, 31261, Saudi Arabia\\
    E-mail\,$:$ kmfurati@kfupm.edu.sa }  

\author{\sl{Nasser-eddine \uppercase{Tatar}}}    
   { King Fahd University of Petroleum and Minerals, Department of Mathematics and Statistics, Dhahran, 31261, Saudi Arabia\\
    E-mail\,$:$ tatarn@kfupm.edu.sa}

\maketitle%

\Abstract{In this paper, we are concerned with a fractional
differential inequality containing a lower order fractional
derivative and a polynomial source term in the right hand side. A
non-existence of non-trivial global solutions result is proved in
an appropriate space by means of the test-function method. The
range of blow up is found to depend only on the lower order
derivative. This is in line with the well-known fact for an
internally weakly damped wave equation that solutions will
converge to solutions of the
parabolic part.}      

\Keywords{Nonexistence, global solution, fractional differential
equation, Riemann-Liouville fractional integral and fractional
derivative}        

\MRSubClass{26A33}      

\section{Introduction}

In this paper we consider the problem%
\begin{equation}
\left\{
\begin{array}{l}
D_{0}^{\alpha }y\left( t\right) +D_{0}^{\beta }y\left( t\right)
=f\left(
t,y\left( t\right) \right) ,\text{ }t>0, \\
I_{0}^{1-\alpha }y\left( t\right) |_{t=0}=b,%
\end{array}%
\right.  \label{1}
\end{equation}%
where $D_{0}^{\sigma }$ is the Riemann-Liouville fractional
derivative of order $\sigma >0,$ $0<\beta \leq \alpha \leq
1.$\newline
A nonexistence result of non-trivial global solutions for the problem (\ref%
{1}) will be proved when $f\left( t,y\left( t\right) \right) \geq
t^{\gamma }\left\vert y\left( t\right) \right\vert ^{m}$ for some
$m>1$ and $\gamma
\in \mathbf{%
\mathbb{R}
.}$ That is we consider the problem:%
\begin{equation}
\left\{
\begin{array}{l}
D_{0}^{\alpha }y\left( t\right) +D_{0}^{\beta }y\left( t\right)
\geq
t^{\gamma }\left\vert y\left( t\right) \right\vert ^{m},\text{ }t>0, \\
I_{0}^{1-\alpha }y\left( t\right) |_{t=0}=b,%
\end{array}%
\right.  \label{2}
\end{equation}%
where $0<\beta \leq \alpha \leq 1$ and show that no solutions can
exist for all time for certain values of $\gamma $ and $m$. In
particular, we find the range of values of $m$ for which solutions
do not exist globally. Clearly, sufficient conditions for
nonexistence provide necessary conditions for existence of
solutions.\newline The interest to fractional calculus has been
accelerated the past three
decades after the publication of the three papers of Bagley and Torvik \cite%
{Bagley1, Bagley2, Bagley3} and the paper by Podlubny
\cite{Podl(2002)}. Many phenomena in diverse fields of science and
engineering can be described by differential equations of
non-integer order. Namely, they arise naturally in
viscoelasticity, porous media, electrochemistry, control and
electromagnetic, etc \cite{Miller-Ross(1993), Oldham-Spanier, Podl(1999)}.%
\newline
In fact it has been shown by experiments that derivatives of
non-integer order can describe many phenomena better than
derivatives of integer order specially hereditary phenomena and
processes.\newline Some recent applications arose in
viscoelasticity, rheology, control
systems, synthesis, robots and nanotechnology, etc (see \cite%
{Hilfer,Kilbas,V,R,M1,M2,P}).\newline Regarding the existence of
solutions for various classes of fractional
differential equations, there are many results (e.g. see \cite%
{A1,A2,F1,F2,F3,Ka2,M,T}). For the issue of nonexistence of
solutions for
fractional differential equations, we refer to \cite%
{F4,Ka1,Laskri-Tatar(2010),Tatar} and to \cite{K1,K2,K3,K4} for
partial differential equations involving fractional derivatives
(see also references therein).\newline The existence and
uniqueness of solutions for problem (\ref{1}) has been discussed
in \cite{Kilbas}.\newline In case $\alpha =\beta =1$\ and $f\left(
t,y\left( t\right) \right)
=2y^{m}\left( t\right) $\ in (\ref{1}) we obtain%
\begin{equation*}
\left\{
\begin{array}{l}
y^{\prime }\left( t\right) =y^{m}\left( t\right) , \\
y\left( t\right) |_{t=0}=b.%
\end{array}%
\right.
\end{equation*}%
This problem has, for $m>1$, the solution\bigskip
\begin{equation*}
y(t)=\left[ \left( 1-m\right) \left( t+c\right) \right] ^{1/\left(
1-m\right) },
\end{equation*}%
where%
\begin{equation*}
c=\frac{b^{1-m}}{1-m}.
\end{equation*}%
Observe that, for $m>1,$ the solution blows-up in finite
time.\newline When $\alpha =1,$ $\beta =0$ and $\gamma =0$, the
problem (\ref{2}) with an equality instead of inequality is
equivalent to the Bernoulli differential
problem%
\begin{equation}
\left\{
\begin{array}{l}
y^{\prime }\left( t\right) +y\left( t\right) =y^{m}\left( t\right) ,\text{ }%
t>0, \\
y\left( t\right) |_{t=0}=b.%
\end{array}%
\right.  \label{Bernoulli}
\end{equation}%
\newline
The solution of (\ref{Bernoulli}) is given by%
\begin{equation*}
y\left( t\right) =\left[ 1+\left( b^{1-m}-1\right) \exp \left( m-1\right) t%
\right] ^{1/\left( 1-m\right) }.
\end{equation*}%
Clearly $y(t)$ blows up in the finite time%
\begin{equation*}
c=\frac{1}{1-m}\ln \left( 1-b^{1-m}\right) ,\text{ }m,\text{ }b>1.
\end{equation*}%
In case $\alpha =\beta $\ in (\ref{2}) we obtain the problem with
only one
fractional derivative%
\begin{equation}
\left\{
\begin{array}{l}
2D_{0}^{\alpha }y\left( t\right) \geq t^{\gamma }\left\vert
y\left( t\right)
\right\vert ^{m},\text{ }t>0, \\
I_{0}^{1-\alpha }y\left( t\right) |_{t=0}=b.%
\end{array}%
\right.  \label{3}
\end{equation}%
Problem (\ref{3}) has been considered by Laskri and Tatar \cite%
{Laskri-Tatar(2010)}. It was shown that if $\gamma >-\alpha $\textit{\ }and%
\textit{\ }$1<m\leq \frac{\gamma +1}{1-\alpha }$, then, Problem
(\ref{3}) does not admit global nontrivial solutions\
when\textit{\ }$b\geq 0.$\newline Here, we would like to
investigate the case where a lower order fractional derivative is
present in the equation (or inequality). It is known that for
hyperbolic equations, say the wave equation with an internal
fractional damping represented by the first derivative (i.e.
$\alpha =2,$ $\beta =1$ also known as the Telegraph equation),
this damping has a dissipation effect. It will compete with the
polynomial source and may take it over this blowing-up term under
certain circumstances. Moreover, it has been shown for the
telegraph problem that solutions approach the solution of the same
problem without the highest derivative when $t$ goes to infinity
(that is the parabolic equation). This result has been generalized
to the fractional derivative case in \cite{Cascaval} and in
\cite{Tatar}.\newline
For our problem here (\ref{2}), we would like to see how much influential $%
D_{0}^{\beta }y$ will be on the blow-up phenomenon. In particular,
how the range of values $m$ ensuring blow-up in finite time would
be affected. We reached the conclusion that here also it is the
lower order derivative (i.e. $\beta $) which determines the range
of blow-up just like the parabolic part in the hyperbolic
problem.\newline The rest of the paper is divided into two
sections. In Section \ref{pre}, we present some definitions,
notations, and lemmas which will be needed later in our proof.
Section \ref{non-ex} is devoted to the nonexistence result.

\section{Preliminaries\label{pre}\protect\bigskip}

In this section we present some definitions, lemmas, properties
and notation which will be used in our result later.\newline

\begin{definition}
\textbf{\label{def:1}}\textit{The Riemann-Liouville left-sided
fractional integral }$I_{a}^{\alpha }f$\textit{\ of order }$\alpha
>0$\textit{\ is
defined by}%
\begin{equation}
I_{a}^{\alpha }f(t):=\frac{1}{\Gamma (\alpha )}\int\nolimits_{a}^{t}\frac{%
f(s)}{(t-s)^{1-\alpha }}ds,\text{ }t>a,\;\alpha >0,  \label{ri1}
\end{equation}%
\textit{provided that the integral exists}. Here $\Gamma (\alpha
)$ is the Gamma function. When $\alpha =0$, we define $I_{a}^{0}f$
$=f$.
\end{definition}

\begin{definition}
\label{def:2}\textit{The Riemann-Liouville right-sided fractional integral }$%
I_{b-}^{\alpha }f$\textit{\ of order }$\alpha >0$\textit{\ is defined by}%
\begin{equation}
I_{b-}^{\alpha }f(t):=\frac{1}{\Gamma (\alpha )}\int\nolimits_{t}^{b}\frac{%
f(s)}{(s-t)^{1-\alpha }}ds,\text{ }t<b,\;\alpha >0,  \label{ri2}
\end{equation}%
\textit{provided that the integral exists. When }$\alpha
=0$\textit{, we define }$I_{b-}^{0}f$\textit{\ }$=f$\textit{.}
\end{definition}

\begin{definition}
\label{def:3}\textit{The Riemann-Liouville left-sided fractional derivative }%
$D_{a}^{\alpha }f$\textit{\ of order }$\alpha $\textit{, }$0<\alpha <1,$%
\textit{\ is defined by\bigskip }%
\begin{equation*}
D_{a}^{\alpha }f\left( t\right) =\frac{d}{dt}I_{a}^{1-\alpha
}f(t),
\end{equation*}%
\textit{that is,}%
\begin{equation}
D_{a}^{\alpha }f\left( t\right) =\frac{1}{\Gamma (1-\alpha )}\frac{d}{dt}%
\int\nolimits_{a}^{t}\frac{f(s)}{(t-s)^{\alpha
}}ds,\;t>a,\;0<\alpha <1, \label{rd1}
\end{equation}%
\textit{when }$\alpha =1$\textit{\ we have }$D_{a}^{\alpha }f$\textit{\ }$%
=Df $\textit{. In particular, when }$\alpha =0$\textit{, }$D_{a}^{0}f$%
\textit{\ }$=f$\textit{.}
\end{definition}

\begin{definition}
\label{def:4}\textit{The Riemann-Liouville right-sided fractional
derivative
}$D_{b-}^{\alpha }f$\textit{\ of order }$\alpha ,$\textit{\ }$0<\alpha <1$%
\textit{, is defined by\bigskip }%
\begin{equation*}
D_{b-}^{\alpha }f\left( t\right) =-\frac{d}{dt}I_{b-}^{1-\alpha
}f(t),
\end{equation*}%
\textit{that is,}%
\begin{equation}
D_{b-}^{\alpha }f\left( t\right) =-\frac{1}{\Gamma (1-\alpha )}\frac{d}{dt}%
\int\nolimits_{t}^{b}\frac{f(s)}{(s-t)^{\alpha
}}ds,\;t<b,\;0<\alpha <1. \label{rd2}
\end{equation}%
\textit{In particular, when }$\alpha =0$\textit{, }$D_{b-}^{\alpha }f$%
\textit{\ }$=f$\textit{.}
\end{definition}

\begin{lemma}
\label{lem:5}\textit{(Fractional Integration by Parts)\ Let }$\alpha >0,$%
\textit{\ }$p\geq 1,$\textit{\ }$q\geq 1$\textit{\ and\ }$\frac{1}{p}+\frac{1%
}{q}\leq 1+\alpha $\textit{\ (}$p\neq 1$\textit{\ and }$q\neq
1$\textit{\ in the case when }$\frac{1}{p}+\frac{1}{q}=1+\alpha
$\textit{). If }$\varphi \in L_{p}\left( a,b\right) $\textit{\ and
}$\psi \in L_{q}\left( a,b\right)
, $\textit{\ then }%
\begin{equation}
\int\nolimits_{a}^{b}\varphi \left( t\right) \left( I_{a}^{\alpha
}\psi \right) \left( t\right) dt=\int\nolimits_{a}^{b}\psi \left(
t\right) \left( I_{b-}^{\alpha }\varphi \right) \left( t\right)
dt.  \label{fibp}
\end{equation}
\end{lemma}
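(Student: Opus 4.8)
The plan is to prove the fractional integration by parts formula \eqref{fibp} by starting from the definition of the left-sided fractional integral, substituting it into the left-hand side, and then interchanging the order of integration via Fubini's theorem. I would write
\begin{equation*}
\int_{a}^{b}\varphi(t)\left(I_{a}^{\alpha}\psi\right)(t)\,dt
=\frac{1}{\Gamma(\alpha)}\int_{a}^{b}\varphi(t)\int_{a}^{t}\frac{\psi(s)}{(t-s)^{1-\alpha}}\,ds\,dt,
\end{equation*}
where the inner integral runs over the region $\{a\le s\le t\le b\}$. The key move is to swap the roles of $s$ and $t$: fixing $s$ first, the variable $t$ then ranges over $(s,b)$, so the double integral becomes
\begin{equation*}
\frac{1}{\Gamma(\alpha)}\int_{a}^{b}\psi(s)\int_{s}^{b}\frac{\varphi(t)}{(t-s)^{1-\alpha}}\,dt\,ds.
\end{equation*}
Recognizing the inner integral as precisely the definition \eqref{ri2} of the right-sided integral $\left(I_{b-}^{\alpha}\varphi\right)(s)$ (with the kernel $(t-s)^{\alpha-1}$ matching $(s-t)^{\alpha-1}$ under the relabeling), one obtains the right-hand side of \eqref{fibp}, completing the identity.

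The main obstacle is justifying the interchange of integration order, and this is exactly where the hypotheses on $p$, $q$, and $\alpha$ enter. Fubini's theorem (or Fubini--Tonelli) requires the integrand $\varphi(t)\psi(s)(t-s)^{\alpha-1}$ to be absolutely integrable over the triangular region $\{a\le s\le t\le b\}$. To verify this, I would estimate the iterated integral of the absolute value using H\"older's inequality. The factors $\varphi\in L_{p}$ and $\psi\in L_{q}$ are controlled by their respective norms, and the weakly singular kernel $(t-s)^{\alpha-1}$ must be handled by a third H\"older exponent; the Young-type condition $\frac{1}{p}+\frac{1}{q}\le 1+\alpha$ is precisely what guarantees that the remaining exponent on the kernel keeps it integrable (the strict inequalities $p\ne 1$, $q\ne 1$ in the limiting case $\frac{1}{p}+\frac{1}{q}=1+\alpha$ prevent a borderline non-integrable singularity). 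This is a standard mapping property of the Riemann--Liouville integral on $L_{p}$ spaces, and the condition ensures all the pieces land in dual spaces.

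Since this is a classical result (it appears, for instance, in the Samko--Kilbas--Marichev and Kilbas--Srivastava--Trujillo references already cited in the paper), I would expect the cleanest presentation to either cite it directly or to reduce the general $L_{p}$--$L_{q}$ statement to the elementary computation above by a density argument: first establish the identity for smooth compactly supported $\varphi$ and $\psi$, where Fubini is trivially applicable, and then extend to the full function spaces by continuity of both sides as bilinear functionals, using the boundedness of $I_{a}^{\alpha}$ and $I_{b-}^{\alpha}$ between the appropriate $L_{p}$ spaces. The forward-looking strategy is therefore: (i) do the formal Fubini swap to reveal the structure, (ii) pin down absolute integrability through a H\"older estimate that consumes the exponent budget $\frac{1}{p}+\frac{1}{q}\le 1+\alpha$, and (iii) if a fully rigorous treatment is desired, close the gap by density. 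I anticipate the integrability bookkeeping in step (ii) to be the only genuinely delicate point.
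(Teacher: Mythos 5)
The paper offers no proof of Lemma \ref{lem:5} at all: it is quoted as a classical result (it is the standard fractional integration-by-parts formula from the Samko--Kilbas--Marichev monograph, and appears in the cited book \cite{Kilbas}), so there is no internal argument to compare yours against. Your proposal is precisely the standard proof of this classical fact, and it is correct in outline: the Fubini swap over the triangle $\{a\le s\le t\le b\}$ turns the left-sided integral acting on $\psi$ into the right-sided integral acting on $\varphi$, and once absolute integrability of $\varphi(t)\psi(s)(t-s)^{\alpha-1}$ is established, the identity follows with no need for the density step you mention as a fallback.

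One point worth sharpening is \emph{how} the hypothesis $\frac{1}{p}+\frac{1}{q}\le 1+\alpha$ actually enters, since it is not a naive three-exponent H\"older estimate on the triangle. The clean bookkeeping is
\begin{equation*}
\iint_{a\le s\le t\le b}\left\vert \varphi(t)\psi(s)\right\vert (t-s)^{\alpha-1}\,ds\,dt
=\Gamma(\alpha)\int_{a}^{b}\left\vert \psi(s)\right\vert \left( I_{b-}^{\alpha}\left\vert \varphi\right\vert \right)(s)\,ds
\le \Gamma(\alpha)\,\Vert \psi\Vert_{L_{q}}\,\bigl\Vert I_{b-}^{\alpha}\left\vert \varphi\right\vert \bigr\Vert_{L_{q^{\prime}}},
\end{equation*}
so everything reduces to the boundedness of $I_{b-}^{\alpha}\colon L_{p}(a,b)\to L_{q^{\prime}}(a,b)$. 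When $\frac{1}{p}+\frac{1}{q}<1+\alpha$ strictly, this follows from Young's convolution inequality, because the truncated kernel $t^{\alpha-1}\chi_{(0,b-a)}$ lies in $L_{r}$ for every $r<\frac{1}{1-\alpha}$; note the kernel itself is integrable for all $\alpha>0$, so your phrase about a ``borderline non-integrable singularity'' slightly misattributes the difficulty. The genuine delicacy is the equality case $\frac{1}{p}+\frac{1}{q}=1+\alpha$, where Young's inequality fails at the endpoint and one must invoke the Hardy--Littlewood theorem on fractional integration ($I^{\alpha}\colon L_{p}\to L_{p^{*}}$ with $\frac{1}{p^{*}}=\frac{1}{p}-\alpha$), which holds only for $p>1$ and with $p^{*}<\infty$; this is exactly the origin of the proviso $p\neq 1$ and $q\neq 1$ in the statement. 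With that refinement, your step (ii) closes and the proof is complete as you structured it.
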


\begin{definition}
\label{def:6}\textit{We consider the weighted spaces of continuous functions}%
\begin{equation*}
C_{\gamma }\left[ a,b\right] =\left\{ f:(a,b]\rightarrow \mathbb{R}\mathbf{:}%
\left( t-a\right) ^{\gamma }f\left( t\right) \in C\left[
a,b\right] \right\} ,\text{ \ }0<\gamma <1,
\end{equation*}%
\begin{equation*}
C_{0}\left[ a,b\right] =C\left[ a,b\right] ,
\end{equation*}%
and%
\begin{equation}
C_{1-\alpha }^{\alpha }\left[ a,b\right] =\left\{ f\in C_{1-\alpha
}\left[
a,b\right] :D_{a}^{\alpha }f\in C_{1-\alpha }\left[ a,b\right] \right\} ,%
\text{ \ }0<\alpha <1.  \label{space}
\end{equation}
\end{definition}

\begin{lemma}
\textbf{\label{lem:7}}\textit{Let }$0\leq \gamma <1$\textit{\ and
}$f\in
C_{\gamma }\left[ a,b\right] $. \textit{Then}%
\begin{equation*}
I_{a}^{\alpha }f\left( a\right) =\underset{t\rightarrow a}{\lim }%
I_{a}^{\alpha }f\left( t\right) =0,\quad 0\leq \gamma <\alpha .
\end{equation*}
\end{lemma}

\begin{proof}
Since $f\in C_{\gamma }\left[ a,b\right] $ then $\left( t-a\right)
^{\gamma }f\left( t\right) $ is continuous on $[a,b]$ and on
$[a,b]$ we have
\begin{equation*}
\left\vert \left( t-a\right) ^{\gamma }f\left( t\right)
\right\vert <M,
\end{equation*}%
for some positive constant $M$. Therefore
\begin{equation*}
\left\vert I_{a}^{\alpha }f\left( t\right) \right\vert <M\left[
I_{a}^{\alpha }\left( s-a\right) ^{-\gamma }\right] \left( t\right) =M\frac{%
\Gamma \left( 1-\gamma \right) }{\Gamma \left( \alpha +1-\gamma \right) }%
\left( t-a\right) ^{\alpha -\gamma }.
\end{equation*}%
As $\alpha >\gamma $ we see that
\begin{equation*}
I_{a}^{\alpha }f\left( a\right) =\underset{t\rightarrow a}{\lim }%
I_{a}^{\alpha }f\left( t\right) =0,\quad 0\leq \gamma <\alpha
\end{equation*}%
which completes the proof of Lemma \textbf{\ref{lem:7}}.
\end{proof}

\begin{lemma}
\textbf{\label{lem:8}}\textit{Let }$\varphi \in C^{1}[0,\infty
)$\textit{\
be a test function, that is: }$\varphi \left( t\right) \geq 0,$\textit{\ }$%
\varphi \left( t\right) $\textit{\ is non-increasing and such that }%
\begin{equation*}
\begin{array}{ll}
\varphi \left( t\right) := & \left\{
\begin{array}{l}
1,\quad t\in \left[ 0,T/2\right] \\
0,\quad t\in \lbrack T,\infty ),%
\end{array}%
\right.%
\end{array}%
\end{equation*}%
\textit{for }$T>0$\textit{. Then}%
\begin{equation}
I\left( T\right) =\int\nolimits_{T/2}^{T}\left( I_{T-}^{1-\alpha }\frac{%
\left\vert \varphi ^{\prime }\right\vert }{\varphi ^{p}}\right)
^{m}\left(
t\right) dt\leq K_{\alpha ,m}T^{1-\alpha m},\text{ \ }0<\alpha <1\text{,\ }T,%
\text{ }p,\text{ }m>0  \label{t1}
\end{equation}%
\textit{where}%
\begin{equation}
K_{\alpha ,m}=\frac{K_{1}^{m}}{2^{m\left( 1-\alpha \right)
+1}\Gamma ^{m}\left( 2-\alpha \right) \left[ m\left( 1-\alpha
\right) +1\right] }, \label{t2}
\end{equation}%
\textit{and }$K_{1}$\textit{\ is a bound for }$\frac{\left\vert
\varphi
^{\prime }\left( r\right) \right\vert }{\varphi \left( r\right) ^{p}}.$%
\textit{\newline }
\end{lemma}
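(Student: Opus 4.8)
The plan is to exploit the explicit integral representation of $I_{T-}^{1-\alpha}$ from Definition \ref{def:2} together with the two structural features of the test function: that $\varphi'\equiv 0$ on $[0,T/2]$ and that the ratio $\left\vert\varphi'\right\vert/\varphi^{p}$ is bounded by $K_{1}$. Setting $g:=\left\vert\varphi'\right\vert/\varphi^{p}$, for $t\in[T/2,T]$ I would write
\[
\left(I_{T-}^{1-\alpha}g\right)(t)=\frac{1}{\Gamma(1-\alpha)}\int_{t}^{T}\frac{g(s)}{(s-t)^{\alpha}}\,ds .
\]
Since $\varphi$ is constant on $[0,T/2]$, the function $g$ is supported in $[T/2,T]$; as the inner variable $s$ runs over $[t,T]\subseteq[T/2,T]$, the bound $g(s)\le K_{1}$ applies throughout the integral. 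This support observation is the conceptual crux; everything after it is computation.

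Next I would evaluate the elementary inner integral $\int_{t}^{T}(s-t)^{-\alpha}\,ds=(T-t)^{1-\alpha}/(1-\alpha)$ and use $\Gamma(2-\alpha)=(1-\alpha)\Gamma(1-\alpha)$ to reach the pointwise estimate
\[
\left(I_{T-}^{1-\alpha}g\right)(t)\le\frac{K_{1}\,(T-t)^{1-\alpha}}{\Gamma(2-\alpha)},\qquad t\in[T/2,T].
\]

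Then I would raise this to the $m$-th power and integrate over $[T/2,T]$. The substitution $u=T-t$ gives
\[
\int_{T/2}^{T}(T-t)^{m(1-\alpha)}\,dt=\int_{0}^{T/2}u^{m(1-\alpha)}\,du=\frac{(T/2)^{m(1-\alpha)+1}}{m(1-\alpha)+1},
\]
so that
\[
I(T)\le\frac{K_{1}^{m}}{\Gamma^{m}(2-\alpha)}\cdot\frac{(T/2)^{m(1-\alpha)+1}}{m(1-\alpha)+1}.
\]
Pulling the factor $2^{-[m(1-\alpha)+1]}$ out of $(T/2)^{m(1-\alpha)+1}$ reproduces precisely the constant $K_{\alpha,m}$ of (\ref{t2}).

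The main obstacle is not analytic but a matter of bookkeeping: this bare computation yields the power $T^{m(1-\alpha)+1}$, whereas the statement asserts $T^{1-\alpha m}$, the two differing by a factor $T^{m}$. I would reconcile this at the outset by making the $T$-scaling of the test function explicit: when $\varphi$ is obtained by rescaling a fixed profile to the interval $[T/2,T]$, the derivative $\varphi'$ carries an extra factor $T^{-1}$, so that $\left\vert\varphi'\right\vert/\varphi^{p}$ contributes $T^{-m}$ after the $m$-th power, turning $T^{m(1-\alpha)+1}$ into $T^{1-\alpha m}$ while leaving $K_{\alpha,m}$ unchanged. Fixing this scaling convention for $K_{1}$ before starting the estimate is the one place where care is genuinely required.
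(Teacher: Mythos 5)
Your proposal is correct and, once unwound, performs exactly the paper's computation; the only real difference is the order of operations. The paper rescales \emph{first}, via $t=\sigma T$ and $s=rT$, reducing $I(T)$ to $\frac{T^{1-\alpha m}}{\Gamma ^{m}(1-\alpha )}\int_{1/2}^{1}\left( \int_{\sigma }^{1}(r-\sigma )^{-\alpha }\frac{\left\vert \varphi ^{\prime }(r)\right\vert }{\varphi (r)^{p}}dr\right) ^{m}d\sigma$ as in (\ref{t5}), and only then applies the bound $K_{1}$ and evaluates the same two elementary integrals you do, $\int_{\sigma }^{1}(r-\sigma )^{-\alpha }dr=(1-\sigma )^{1-\alpha }/(1-\alpha )$ followed by $\int_{1/2}^{1}(1-\sigma )^{m(1-\alpha )}d\sigma$, which is your $u=T-t$ computation transported to the unit interval. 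Note that in the paper's substitution the Jacobian $ds=T\,dr$ has been silently absorbed: the $\varphi$ appearing after (\ref{t4}) is really the rescaled profile $r\mapsto \varphi (rT)$, whose derivative carries the factor $T$. That notational absorption is precisely the $T^{m}$ you isolated. Your diagnosis of the bookkeeping is the correct reading and is in fact forced: if $K_{1}$ were taken literally as a bound for $\left\vert \varphi ^{\prime }(t)\right\vert /\varphi (t)^{p}$ of the actual test function on $[0,T]$, then for a rescaled-profile family this quantity is of order $T^{-1}$ and the bare estimate gives $T^{m(1-\alpha )+1}$, so (\ref{t1}) with a $T$-independent $K_{\alpha ,m}$ can only hold when $K_{1}$ is understood as a $T$-independent bound for the fixed profile. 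Your version has the merit of making explicit the scaling convention the paper hides inside its change of variables.

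Two small points to tighten. First, state the convention as part of the setup rather than as an after-the-fact correction: take $\varphi (t)=\Phi (t/T)$ with $\Phi$ a fixed $C^{1}$ profile equal to $1$ on $[0,1/2]$ and $0$ on $[1,\infty )$, and let $K_{1}$ bound $\left\vert \Phi ^{\prime }\right\vert /\Phi ^{p}$; then $\left\vert \varphi ^{\prime }(t)\right\vert /\varphi (t)^{p}\leq K_{1}/T$ and your pointwise estimate yields (\ref{t1}) with the constant (\ref{t2}) directly. In this connection you should also address, as the paper does, why such a finite $K_{1}$ exists at all ($\varphi$ vanishes at $T$, so the ratio could blow up there); the paper's remedy is to replace the profile by a sufficiently high power $\Phi ^{\lambda }$. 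Second, your ``support observation'' is not actually the crux within this lemma: the outer integral already runs over $[T/2,T]$, so $s\in \lbrack t,T]\subseteq \lbrack T/2,T]$ automatically and the bound $g\leq K_{1}$ needs no support argument; the vanishing of $\varphi ^{\prime }$ on $[0,T/2]$ earns its keep in the proof of Theorem \ref{thm:10}, where the integral over $(0,T)$ is cut down to $(T/2,T)$.
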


\begin{proof}
Using (\ref{ri2}), we see that%
\begin{equation}
I\left( T\right) =\int\nolimits_{T/2}^{T}\left( \frac{1}{\Gamma
\left(
1-\alpha \right) }\int\nolimits_{t}^{T}\left( s-t\right) ^{-\alpha }\frac{%
\left\vert \varphi ^{\prime }\left( s\right) \right\vert }{\varphi
\left( s\right) ^{p}}ds\right) ^{m}dt.  \label{t3}
\end{equation}%
The change of variable $\sigma T=t$ in (\ref{t3}) yields%
\begin{equation}
I\left( T\right) =\int\nolimits_{1/2}^{1}\left( \frac{1}{\Gamma
\left( 1-\alpha \right) }\int\nolimits_{\sigma T}^{T}\left(
s-\sigma T\right)
^{-\alpha }\frac{\left\vert \varphi ^{\prime }\left( s\right) \right\vert }{%
\varphi \left( s\right) ^{p}}ds\right) ^{m}Td\sigma .  \label{t4}
\end{equation}%
Another change of variable $s=rT$ in (\ref{t4}) gives%
\begin{equation*}
I\left( T\right) =\int\nolimits_{1/2}^{1}\left( \frac{1}{\Gamma
\left( 1-\alpha \right) }\int\nolimits_{\sigma }^{1}\left(
rT-\sigma T\right)
^{-\alpha }\frac{\left\vert \varphi ^{\prime }\left( r\right) \right\vert }{%
\varphi \left( r\right) ^{p}}dr\right) ^{m}Td\sigma
\end{equation*}%
\begin{equation}
=\frac{T^{1-\alpha m}}{\Gamma ^{m}\left( 1-\alpha \right) }%
\int\nolimits_{1/2}^{1}\left( \int\nolimits_{\sigma }^{1}\left(
r-\sigma \right) ^{-\alpha }\frac{\left\vert \varphi ^{\prime
}\left( r\right) \right\vert }{\varphi \left( r\right)
^{p}}dr\right) ^{m}d\sigma . \label{t5}
\end{equation}%
Since $\varphi \in C^{1}\left( [0,\infty )\right) ,$ we may assume
without
loss of generality that%
\begin{equation*}
\frac{\left\vert \varphi ^{\prime }\left( r\right) \right\vert
}{\varphi \left( r\right) ^{p}}\leq K_{1},
\end{equation*}%
for some positive constant $K_{1}$, for otherwise we consider
$\varphi ^{\lambda }\left( r\right) $ with some sufficiently large
$\lambda $.
Therefore from (\ref{t5}) we get%
\begin{equation*}
I\left( T\right) \leq \frac{K_{1}^{m}T^{1-\alpha m}}{\Gamma
^{m}\left( 1-\alpha \right) }\int\nolimits_{1/2}^{1}\left(
\int\nolimits_{\sigma
}^{1}\left( r-\sigma \right) ^{-\alpha }dr\right) ^{m}d\sigma =\frac{%
K_{1}^{m}T^{1-\alpha m}}{\Gamma ^{m}\left( 2-\alpha \right) }%
\int\nolimits_{1/2}^{1}\left( 1-\sigma \right) ^{m\left( 1-\alpha
\right) }d\sigma
\end{equation*}%
\begin{equation*}
=\frac{K_{1}^{m}}{2^{m\left( 1-\alpha \right) +1}\Gamma ^{m}\left(
2-\alpha \right) \left[ m\left( 1-\alpha \right) +1\right]
}T^{1-\alpha m}.
\end{equation*}%
Therefore%
\begin{equation*}
I\left( T\right) \leq K_{\alpha ,m}T^{1-\alpha m}.
\end{equation*}
\end{proof}

\begin{remark}
\label{rem:1}Lemma \textbf{\ref{lem:8}} is true also for the case
$\alpha =1$ . We prove this fact in the following lemma.
\end{remark}

\begin{lemma}
\label{lem:9}\textit{Let }$\varphi $ \textit{be as in Lemma \textbf{\ref%
{lem:8}}. Then}%
\begin{equation}
I\left( T\right) =\int\nolimits_{T/2}^{T}\left( \frac{\left\vert
\varphi ^{\prime }\left( t\right) \right\vert }{\varphi ^{p}\left(
t\right) }\right) ^{m}dt\leq \frac{1}{2}K_{1}^{m}T^{1-m},\text{ \
\ }T,\text{ }p,\text{ }m>0, \label{t6}
\end{equation}%
\textit{with}%
\begin{equation*}
\frac{\left\vert \varphi ^{\prime }\left( r\right) \right\vert
}{\varphi \left( r\right) ^{p}}\leq K_{1}.
\end{equation*}
\end{lemma}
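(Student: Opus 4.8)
The plan is to prove Lemma \ref{lem:9}, which is the $\alpha = 1$ analogue of Lemma \ref{lem:8}. Here the outer integral no longer contains a right-sided fractional integral, because when $\alpha = 1$ the operator $I_{T-}^{1-\alpha} = I_{T-}^{0}$ reduces to the identity. Thus the quantity $I(T)$ simplifies to a plain integral of $\left( \left\vert \varphi' \right\vert / \varphi^p \right)^m$ over $[T/2, T]$, and the main task is simply to exploit the pointwise bound on $\left\vert \varphi'(r) \right\vert / \varphi(r)^p$ together with the length of the interval.

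First I would write out $I(T)$ explicitly as $\int\nolimits_{T/2}^{T} \left( \left\vert \varphi'(t) \right\vert / \varphi^p(t) \right)^m \, dt$, following the notation of \eqref{t6}. As in the proof of Lemma \ref{lem:8}, I would invoke the same justification that we may assume without loss of generality the bound $\left\vert \varphi'(r) \right\vert / \varphi(r)^p \leq K_1$ for some positive constant $K_1$ (replacing $\varphi$ by $\varphi^\lambda$ for large $\lambda$ if necessary). Raising this bound to the power $m$ gives $\left( \left\vert \varphi'(t) \right\vert / \varphi^p(t) \right)^m \leq K_1^m$ pointwise on $[T/2, T]$.

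Next I would integrate this constant bound over the interval $[T/2, T]$, whose length is $T/2$. This immediately yields $I(T) \leq K_1^m \cdot (T - T/2) = \tfrac{1}{2} K_1^m T$, but that does not match the stated exponent. To recover the factor $T^{1-m}$ in \eqref{t6}, I expect the correct scaling comes from the change of variable $t = rT$ (exactly as in the passage from \eqref{t3} to \eqref{t5} in the previous lemma), which turns the integrand's bound and the $dt = T\,dr$ factor into an overall power of $T$. Under the substitution the bound on $\left\vert \varphi' \right\vert / \varphi^p$ is understood with respect to the normalized variable $r$, so the derivative scales as $T^{-1}$ in the original variable; raising to the power $m$ and multiplying by the Jacobian $T$ produces $T^{1-m}$, and the remaining integral over $[1/2, 1]$ contributes the factor $1/2$.

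The whole argument is elementary and I do not anticipate a genuine obstacle; the only point requiring care is the bookkeeping of the scaling so that the exponent $1 - m$ rather than $1$ appears, which is dictated by how the bound $K_1$ on $\left\vert \varphi'(r) \right\vert / \varphi(r)^p$ is interpreted for the rescaled test function. Once that is settled the desired estimate $I(T) \leq \tfrac{1}{2} K_1^m T^{1-m}$ follows directly, completing the proof.
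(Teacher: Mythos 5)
Your proof is correct and follows essentially the same route as the paper: the paper's proof is exactly the substitution $t=sT$, under which $\varphi'$ contributes a factor $T^{-1}$ (the paper writes $\left\vert \varphi'(s)\right\vert /\left( T\varphi^{p}(s)\right)$ after the change of variable), the Jacobian contributes $T$, and the bound $K_{1}$ on the normalized ratio over the interval $[1/2,1]$ of length $1/2$ yields $\frac{1}{2}K_{1}^{m}T^{1-m}$. Your observation that $K_{1}$ must be read as a bound in the rescaled variable (so that the naive pointwise estimate giving $\frac{1}{2}K_{1}^{m}T$ is discarded) is precisely the convention the paper uses implicitly, so your final argument coincides with the paper's.
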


\begin{proof}
The change of variable $sT=t$ in the expression of $I\left(
T\right) $\
leads to%
\begin{equation*}
I\left( T\right) =\int\nolimits_{1/2}^{1}\left( \frac{\left\vert
\varphi
^{\prime }\left( s\right) \right\vert }{T\varphi ^{p}\left( s\right) }%
\right) ^{m}Tds=T^{1-m}\int\nolimits_{1/2}^{1}\left(
\frac{\left\vert
\varphi ^{\prime }\left( s\right) \right\vert }{\varphi ^{p}\left( s\right) }%
\right) ^{m}ds\leq \frac{1}{2}K_{1}^{m}T^{1-m}.
\end{equation*}
\end{proof}

\section{Nonexistence result\label{non-ex}}

In this section, we consider the problem%
\begin{equation}
\left\{
\begin{array}{l}
D_{0}^{\alpha }y\left( t\right) +D_{0}^{\beta }y\left( t\right)
\geq
t^{\gamma }\left\vert y\left( t\right) \right\vert ^{m},\text{ }t>0,\text{\ }%
m>1,\text{\ \ \ }0<\beta <\alpha <1, \\
I_{0}^{1-\alpha }y\left( t\right) |_{t=0}=b,%
\end{array}%
\right.  \label{nr1}
\end{equation}%
where $D_{0}^{\sigma }$ is defined in (\ref{rd1}). Nonexistence of
non-trivial solutions is investigated in the space $C_{1-\alpha
}^{\alpha }$ defined in (\ref{space}).

\begin{theorem}
\label{thm:10}\textit{Assume that }$\gamma >-\beta $\textit{\ and
}$1<m\leq \frac{\gamma +1}{1-\beta }.$\textit{\ Then, Problem
(\ref{nr1}) does not admit global nontrivial solutions in
}$C_{1-\alpha }^{\alpha },$\textit{\ when }$b\geq 0.$
\end{theorem}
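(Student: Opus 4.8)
The plan is to run the test-function (nonlinear capacity) method: assume a nontrivial global solution $y\in C^{\alpha}_{1-\alpha}[0,\infty)$ exists, derive an a priori bound of the form $\int_0^T t^\gamma|y|^m\,\Phi\,dt\le C\,T^{\theta}$ with $\theta\le 0$ under the stated hypotheses, and then let $T\to\infty$ to force $\int_0^\infty t^\gamma|y|^m\,dt=0$, hence $y\equiv 0$, a contradiction. Concretely, I fix the cut-off $\varphi$ of Lemma \ref{lem:8} as a profile on $[0,1]$ and work with the rescaled test function $\Phi(t):=\varphi(t/T)$, which equals $1$ on $[0,T/2]$, vanishes on $[T,\infty)$, and whose derivative is supported in $[T/2,T]$. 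Multiplying (\ref{nr1}) by $\Phi\ge 0$ and integrating over $(0,T)$ gives
\[
\int_0^T t^\gamma|y(t)|^m\Phi(t)\,dt\le\int_0^T\big(D_0^\alpha y\big)\Phi\,dt+\int_0^T\big(D_0^\beta y\big)\Phi\,dt .
\]
Since $D_0^\sigma y=\frac{d}{dt}I_0^{1-\sigma}y$, a classical integration by parts followed by the fractional integration by parts of Lemma \ref{lem:5} (with order $1-\sigma$) transfers each derivative onto the test function,
\[
\int_0^T\big(D_0^\sigma y\big)\Phi\,dt=\Big[\Phi\,I_0^{1-\sigma}y\Big]_0^T-\int_0^T y\,\big(I_{T-}^{1-\sigma}\Phi'\big)\,dt .
\]
The boundary term at $t=T$ vanishes since $\Phi(T)=0$; at $t=0$ the $\alpha$-term produces $-\Phi(0)I_0^{1-\alpha}y(0)=-b$, while for the $\beta$-term I invoke Lemma \ref{lem:7}: because $y\in C_{1-\alpha}$ and $1-\alpha<1-\beta$ (this is exactly where $\beta<\alpha$ is used) we get $I_0^{1-\beta}y(0)=0$, so the $\beta$ boundary term is absent. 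Using $b\ge 0$ to discard the favourable $+b$ on the left yields
\[
\int_0^T t^\gamma|y|^m\Phi\,dt\le\int_0^T|y|\,\big|I_{T-}^{1-\alpha}\Phi'\big|\,dt+\int_0^T|y|\,\big|I_{T-}^{1-\beta}\Phi'\big|\,dt .
\]

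Next I estimate each right-hand term by Young's inequality with exponents $m$ and $m'=m/(m-1)$, splitting $|y|\,|I_{T-}^{1-\sigma}\Phi'|=(t^{\gamma/m}\Phi^{1/m}|y|)(t^{-\gamma/m}\Phi^{-1/m}|I_{T-}^{1-\sigma}\Phi'|)$. The first factor, to the power $m$, reproduces the left-hand side and is absorbed after taking the Young parameter small. For the second factor the key technical point is to route the weight $\Phi^{-1/m}$ \emph{inside} the right-sided fractional integral: since $\varphi$ is non-increasing, $\Phi(t)^{-1/m}\le\Phi(s)^{-1/m}$ for $s\ge t$, so $\Phi(t)^{-1/m}|I_{T-}^{1-\sigma}\Phi'(t)|\le I_{T-}^{1-\sigma}\big(|\Phi'|/\Phi^{1/m}\big)(t)$, which is exactly the quantity controlled by Lemma \ref{lem:8} with $p=1/m$. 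Applying Lemma \ref{lem:8} with its exponent taken to be $m'$, together with the scaling $t=T\tau$, $s=Tr$ to restore the weight $t^\gamma$, I expect the order-$\sigma$ remainder to be bounded by $C\,T^{1-\sigma m'-\gamma m'/m}$.

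Collecting the two contributions gives $\int_0^T t^\gamma|y|^m\Phi\,dt\le C\big(T^{1-\alpha m'-\gamma m'/m}+T^{1-\beta m'-\gamma m'/m}\big)$. Because $\beta<\alpha$, the $\beta$-exponent is the larger of the two and thus governs the limit; a direct computation shows $1-\beta m'-\gamma m'/m\le 0$ is equivalent to $m(1-\beta)\le\gamma+1$, i.e. $m\le(\gamma+1)/(1-\beta)$, which is precisely the hypothesis and the reason the lower-order derivative fixes the blow-up range. In the subcritical case $m<(\gamma+1)/(1-\beta)$ the exponent is strictly negative, so $T\to\infty$ forces $\int_0^\infty t^\gamma|y|^m\,dt=0$ and hence $y\equiv 0$. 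In the critical case $m=(\gamma+1)/(1-\beta)$ the bound only gives $\int_0^\infty t^\gamma|y|^m\,dt<\infty$; one then repeats the estimate but restricts the remainder integrals to the annulus $[T/2,T]$, where $\int_{T/2}^\infty t^\gamma|y|^m\,dt\to 0$ by absolute continuity of the integral, again driving the right-hand side to $0$ and forcing $y\equiv 0$.

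I expect the main obstacle to be the sharp test-function estimate rather than the structural steps. One must reconcile the transferred terms $I_{T-}^{1-\sigma}\Phi'$, which are supported on all of $[0,T]$, with Lemma \ref{lem:8}, which is stated only on $[T/2,T]$; carry out the monotonicity trick that moves $\Phi^{-1/m}$ inside the fractional integral; and control the weight factor $t^{-\gamma m'/m}$ near $t=0$, where integrability is delicate. Keeping every power of $T$ aligned so that the critical exponent lands exactly on $m=(\gamma+1)/(1-\beta)$ is the subtle part, as is the limiting argument in the critical case.
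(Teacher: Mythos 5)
You have reproduced the paper's own argument in all essentials: the same cut-off from Lemma \ref{lem:8}, the same transfer of $D_{0}^{\sigma}=\frac{d}{dt}I_{0}^{1-\sigma}$ onto the test function via classical integration by parts followed by Lemma \ref{lem:5}, the same use of Lemma \ref{lem:7} (with $y\in C_{1-\alpha}$ and $1-\alpha<1-\beta$) to annihilate the boundary term of the $\beta$-part, the same exponents $T^{1-\alpha m^{\prime}-\gamma m^{\prime}/m}$ and $T^{1-\beta m^{\prime}-\gamma m^{\prime}/m}$ with the lower order $\beta$ governing the range, and the same two-stage ending in the critical case $m=\frac{\gamma+1}{1-\beta}$ (first deduce convergence of $\int_{0}^{\infty}t^{\gamma}|y|^{m}\varphi\,dt$ from the bounded estimate, then rerun the H\"{o}lder step keeping the first factor on the annulus $[T/2,T]$ so that it tends to $0$). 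Your use of Young's inequality with absorption instead of the paper's H\"{o}lder-then-divide-by-$I_{1}^{1/m}$ is cosmetic (it requires $I_{1}<\infty$ for each fixed $T$, which holds since $m\leq\frac{\gamma+1}{1-\beta}<\frac{\gamma+1}{1-\alpha}$ makes $t^{\gamma-m(1-\alpha)}$ integrable at the origin), and your monotonicity trick pushing $\Phi^{-1/m}$ inside the right-sided integral is the exact dual of the paper's step, which pushes $\varphi(s)^{1/m}$ inside the left-sided integral $I_{0}^{1-\sigma}$ using $\varphi(s)\geq\varphi(t)$ for $s\leq t$.

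The one genuine gap is the obstacle you flag but do not resolve, and it comes from your ordering of operations. Doing the fractional integration by parts first leaves the remainder $\int_{0}^{T}|y|\,\bigl|I_{T-}^{1-\sigma}\Phi^{\prime}\bigr|\,dt$ supported on all of $[0,T]$, and your subsequent splitting produces the factor $\bigl(\int_{0}^{T}t^{-\gamma m^{\prime}/m}\bigl(I_{T-}^{1-\sigma}(|\Phi^{\prime}|/\Phi^{1/m})\bigr)^{m^{\prime}}dt\bigr)^{1/m^{\prime}}$, whose weight $t^{-\gamma m^{\prime}/m}$ is not integrable at $t=0$ whenever $\gamma\geq m-1$; that is a nonempty part of the hypothesis range (take $\gamma$ large and $m$ near $1$), so the argument as described collapses there. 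The paper's ordering sidesteps this: since $\varphi^{\prime}\equiv0$ on $[0,T/2]$, the outer integral is restricted to $[T/2,T]$ and the weight $\varphi(s)^{1/m}$ is inserted inside $I_{0}^{1-\sigma}$ before any transfer, so the splitting $t^{\gamma/m}t^{-\gamma/m}$ occurs only on $[T/2,T]$, where $t^{-\gamma/m}\leq\max\{1,2^{\gamma/m}\}T^{-\gamma/m}$. To be fair, the paper's own application of Lemma \ref{lem:5} then writes the transferred integral as $\int_{T/2}^{T}$, silently discarding the nonnegative contribution of $I_{T-}^{1-\sigma}(|\varphi^{\prime}|/\varphi^{1/m})$ on $[0,T/2]$, so you have put your finger on exactly the point the published write-up glosses over. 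The step is repairable: for $t\leq T/2$ one has $I_{T-}^{1-\sigma}(|\varphi^{\prime}|/\varphi^{1/m})(t)\leq CK_{1}T^{-\sigma}$ after rescaling, and splitting $[0,1]\cup[1,T/2]$ yields contributions of order $T^{1/m^{\prime}-\sigma-\gamma/m}$ or $T^{-\sigma}$ (the latter with a constant depending on $y$ through $\int_{0}^{1}|y|\,dt<\infty$), all of which still vanish or stay bounded in the respective cases; but either this extra estimate or the paper's reordering must be supplied before your proof is complete.
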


\begin{proof}
Assume, on the contrary, that a nontrivial solution $y$ exists for all time $%
t>0.$ Let $\varphi $ be as in Lemma \textbf{\ref{lem:8}}.
Multiplying the inequality in (\ref{nr1}) by $\varphi \left(
t\right) $ and integrating over $\left( 0,T\right) $ we get
\begin{equation}
I_{1}:=\int\nolimits_{0}^{T}t^{\gamma }\left\vert y\left( t\right)
\right\vert ^{m}\varphi \left( t\right) dt\leq
\int_{0}^{T}D_{0}^{\alpha }y\left( t\right) \varphi \left(
t\right) dt+\int_{0}^{T}D_{0}^{\beta }y\left( t\right) \varphi
\left( t\right) dt.  \label{nr2}
\end{equation}%
Let%
\begin{equation*}
I_{2}:=\int_{0}^{T}\varphi \left( t\right) D_{0}^{\alpha }y\left(
t\right) dt,
\end{equation*}%
and%
\begin{equation*}
I_{3}:=\int_{0}^{T}\varphi \left( t\right) D_{0}^{\beta }y\left(
t\right) dt.
\end{equation*}%
From the definition of $D_{0}^{\alpha }y$ in (\ref{rd1}) we can
write
\begin{equation*}
I_{2}=\int_{0}^{T}\varphi \left( t\right)
\frac{d}{dt}I_{0}^{1-\alpha }y\left( t\right) dt.
\end{equation*}%
An integration by parts yields
\begin{equation*}
I_{2}=\left[ \varphi \left( t\right) I_{0}^{1-\alpha }y\left( t\right) %
\right] _{t=0}^{T}-\int_{0}^{T}\varphi ^{\prime }\left( t\right)
I_{0}^{1-\alpha }y\left( t\right) dt.
\end{equation*}%
Since $\varphi \left( T\right) =0,$ $\varphi \left( 0\right) =1$ and $%
I_{0}^{1-\alpha }y\left( 0\right) =b,$ then
\begin{equation*}
I_{2}=-b-\int\nolimits_{0}^{T}\varphi ^{\prime }\left( t\right)
I_{0}^{1-\alpha }y\left( t\right) dt.
\end{equation*}%
As $b\geq 0,$ we have%
\begin{equation*}
I_{2}\leq -\int\nolimits_{0}^{T}\varphi ^{\prime }\left( t\right)
I_{0}^{1-\alpha }y\left( t\right) dt\leq
\int\nolimits_{0}^{T}\left\vert \varphi ^{\prime }\left( t\right)
\right\vert \left( I_{0}^{1-\alpha }\left\vert y\right\vert
\right) \left( t\right) dt
\end{equation*}%
\begin{equation}
\leq \frac{1}{\Gamma \left( 1-\alpha \right) }\int\nolimits_{0}^{T}\left%
\vert \varphi ^{\prime }\left( t\right) \right\vert \int\nolimits_{0}^{t}%
\frac{\left\vert y\left( s\right) \right\vert }{\left( t-s\right) ^{\alpha }}%
dsdt.\text{ \ \ \ \ \ \ \ \ \ }  \label{bound}
\end{equation}%
Because $\varphi \left( t\right) $ is nonincreasing $\varphi
\left( s\right)
\geq \varphi \left( t\right) $ for all $t\geq s,$ and therefore%
\begin{equation*}
\frac{1}{\varphi \left( s\right) ^{1/m}}\leq \frac{1}{\varphi
\left( t\right) ^{1/m}},\;m>1.
\end{equation*}%
Also we have
\begin{equation*}
\varphi ^{\prime }\left( t\right) =0,\;t\in \left[ 0,T/2\right] .
\end{equation*}%
Thus%
\begin{equation*}
I_{2}\leq \frac{1}{\Gamma \left( 1-\alpha \right) }\int\nolimits_{0}^{T}%
\left\vert \varphi ^{\prime }\left( t\right) \right\vert
\int\nolimits_{0}^{t}\frac{\left\vert y\left( s\right) \right\vert
}{\left( t-s\right) ^{\alpha }}\frac{\varphi \left( s\right)
^{1/m}}{\varphi \left( s\right) ^{1/m}}dsdt\text{ \ \ \ \ \ }
\end{equation*}%
\begin{equation*}
\leq \frac{1}{\Gamma \left( 1-\alpha \right) }\int\nolimits_{0}^{T}\frac{%
\left\vert \varphi ^{\prime }\left( t\right) \right\vert }{\varphi
\left( t\right) ^{1/m}}\int\nolimits_{0}^{t}\frac{\left\vert
y\left( s\right)
\right\vert }{\left( t-s\right) ^{\alpha }}\varphi \left( s\right) ^{1/m}dsdt%
\text{ \ }
\end{equation*}%
\begin{equation*}
\leq \frac{1}{\Gamma \left( 1-\alpha \right) }\int\nolimits_{T/2}^{T}\frac{%
\left\vert \varphi ^{\prime }\left( t\right) \right\vert }{\varphi
\left( t\right) ^{1/m}}\int\nolimits_{0}^{t}\frac{\left\vert
y\left( s\right) \right\vert }{\left( t-s\right) ^{\alpha
}}\varphi \left( s\right) ^{1/m}dsdt
\end{equation*}%
\begin{equation*}
\leq \int\nolimits_{T/2}^{T}\frac{\left\vert \varphi ^{\prime
}\left( t\right) \right\vert }{\varphi \left( t\right)
^{1/m}}\left( I_{0}^{1-\alpha }\varphi ^{1/m}\left\vert
y\right\vert \right) \left( t\right) dt.\text{ \ \ \ \ \ \ \ \ \ \
\ \ \ \ \ \ \ \ \ \ \ \ \ \ }
\end{equation*}%
A fractional integration by parts (\ref{fibp}), in the last
expression yields
\begin{equation*}
I_{2}\leq \int\nolimits_{T/2}^{T}\left( I_{T-}^{1-\alpha
}\frac{\left\vert \varphi ^{\prime }\right\vert }{\varphi
^{1/m}}\right) \left( t\right) \varphi \left( t\right)
^{1/m}\left\vert y\left( t\right) \right\vert dt.
\end{equation*}%
Next, we multiply by $t^{\gamma /m}t^{-\gamma /m}$ inside the
integral in the right hand side
\begin{equation*}
I_{2}\leq \int\nolimits_{T/2}^{T}\left( I_{T-}^{1-\alpha
}\frac{\left\vert \varphi ^{\prime }\right\vert }{\varphi
^{1/m}}\right) \left( t\right) \varphi \left( t\right)
^{1/m}\frac{t^{\gamma /m}}{t^{\gamma /m}}\left\vert y\left(
t\right) \right\vert dt.
\end{equation*}%
For $\gamma <0$ we have $t^{-\gamma /m}<T^{-\gamma /m}$ (because
$t<T$) and for $\gamma >0$ we get $t^{-\gamma /m}<2^{\gamma
/m}T^{-\gamma /m}$ (because $T/2<t$): that is
\begin{equation*}
t^{-\gamma /m}<\max \left\{ 1,2^{\gamma /m}\right\} T^{-\gamma
/m}.
\end{equation*}%
Then
\begin{equation}
I_{2}\leq \max \left\{ 1,2^{\gamma /m}\right\} T^{-\gamma
/m}\int\nolimits_{T/2}^{T}\left( I_{T-}^{1-\alpha
}\frac{\left\vert \varphi ^{\prime }\right\vert }{\varphi
^{1/m}}\right) \left( t\right) t^{\gamma /m}\varphi \left(
t\right) ^{1/m}\left\vert y\left( t\right) \right\vert dt.
\label{nr3}
\end{equation}%
By H\"{o}lder's inequality, it is clear that%
\begin{equation*}
I_{2}\leq \max \left\{ 1,2^{\gamma /m}\right\} T^{-\gamma
/m}\left( \int\nolimits_{T/2}^{T}t^{\gamma }\varphi \left(
t\right) \left\vert y\left( t\right) \right\vert ^{m}dt\right)
^{\frac{1}{m}}\left( \int\nolimits_{T/2}^{T}\left(
I_{T-}^{1-\alpha }\frac{\left\vert \varphi ^{\prime }\right\vert
}{\varphi ^{1/m}}\right) ^{m^{\prime }}\left( t\right) dt\right)
^{\frac{1}{m^{\prime }}}.
\end{equation*}%
Lemma \textbf{\ref{lem:8}} implies that%
\begin{equation}
I_{2}\leq \max \left\{ 1,2^{\gamma /m}\right\} T^{-\gamma
/m}\left( \int\nolimits_{T/2}^{T}t^{\gamma }\varphi \left(
t\right) \left\vert y\left( t\right) \right\vert ^{m}dt\right)
^{\frac{1}{m}}\left( K_{\alpha ,m^{\prime }}T^{1-\alpha m^{\prime
}}\right) ^{\frac{1}{m^{\prime }}},  \label{nr4}
\end{equation}%
where $K_{\alpha ,m^{\prime }}$ is the constant appearing in Lemma \textbf{%
\ref{lem:8}} corresponding to the present exponents. Therefore from (\ref%
{nr4}) we have the estimate%
\begin{equation}
I_{2}\leq \max \left\{ 1,2^{\gamma /m}\right\} K_{\alpha ,m^{\prime }}^{%
\frac{1}{m^{\prime }}}T^{1/m^{\prime }-\alpha -\gamma /m}I_{1}^{\frac{1}{m}}.%
\text{\ \ \ \ \ }  \label{nr5}
\end{equation}%
Now, we turn to $I_{3}$. First, since $y\in C_{1-\alpha }\left[
0,T\right] $
and $1-\alpha <1-\beta $, then by Lemma \textbf{\ref{lem:7}} we have%
\begin{equation*}
I_{0}^{1-\beta }y\left( 0\right) =\lim_{t\rightarrow
0}I_{0}^{1-\beta }y\left( t\right) =0.
\end{equation*}%
An integration by parts in%
\begin{equation*}
I_{3}=\int_{0}^{T}\varphi \left( t\right) D_{0}^{\beta }y\left(
t\right) dt=\int_{0}^{T}\varphi \left( t\right)
\frac{d}{dt}I_{0}^{1-\beta }y\left( t\right) dt
\end{equation*}%
gives
\begin{equation*}
I_{3}=\left[ \varphi \left( t\right) I_{0}^{1-\beta }y\left(
t\right) \right] _{t=0}^{T}-\int_{0}^{T}\varphi ^{\prime }\left(
t\right) I_{0}^{1-\beta }y\left( t\right) dt.
\end{equation*}%
Since $\varphi \left( T\right) =0$ and $I_{0}^{1-\beta }y\left(
0\right) =0,$
it follows that%
\begin{equation*}
I_{3}=-\int\nolimits_{0}^{T}\varphi ^{\prime }\left( t\right)
I_{0}^{1-\beta }y\left( t\right) dt\leq
\int\nolimits_{0}^{T}\left\vert \varphi ^{\prime }\left( t\right)
\right\vert \left( I_{0}^{1-\beta }\left\vert y\right\vert \right)
\left( t\right) dt
\end{equation*}%
\begin{equation*}
\leq \frac{1}{\Gamma \left( 1-\beta \right)
}\int\nolimits_{0}^{T}\left\vert
\varphi ^{\prime }\left( t\right) \right\vert \int\nolimits_{0}^{t}\frac{%
\left\vert y\left( s\right) \right\vert }{\left( t-s\right) ^{\beta }}dsdt.%
\text{ \ \ \ \ \ \ \ \ \ \ }
\end{equation*}%
Replacing $\alpha $ by $\beta $ in the argument above allows us to write%
\begin{equation}
I_{3}\leq \max \left\{ 1,2^{\gamma /m}\right\} T^{-\gamma
/m}\int\nolimits_{T/2}^{T}\left( I_{T-}^{1-\beta }\frac{\left\vert
\varphi ^{\prime }\right\vert }{\varphi ^{1/m}}\right) \left(
t\right) t^{\gamma /m}\varphi \left( t\right) ^{1/m}\left\vert
y\left( t\right) \right\vert dt, \label{nr6}
\end{equation}%
or simply
\begin{equation}
I_{3}\leq K_{\beta ,m^{\prime }}^{\frac{1}{m^{\prime }}}\max
\left\{
1,2^{\gamma /m}\right\} T^{1/m^{\prime }-\beta -\gamma /m}I_{1}^{\frac{1}{m}%
}.  \label{nr7}
\end{equation}%
From (\ref{nr2}), (\ref{nr5}) and (\ref{nr7}), we have%
\begin{equation*}
I_{1}\leq \max \left\{ 1,2^{\gamma /m}\right\} K_{\alpha ,m^{\prime }}^{%
\frac{1}{m^{\prime }}}T^{1/m^{\prime }-\alpha -\gamma /m}I_{1}^{\frac{1}{m}%
}+K_{\beta ,m^{\prime }}^{\frac{1}{m^{\prime }}}\max \left\{
1,2^{\gamma /m}\right\} T^{1/m^{\prime }-\beta -\gamma
/m}I_{1}^{\frac{1}{m}}
\end{equation*}%
\begin{equation*}
\leq \max \left\{ K_{\alpha ,m^{\prime }}^{\frac{1}{m^{\prime
}}},K_{\beta ,m^{\prime }}^{\frac{1}{m^{\prime }}}\right\} \max
\left\{ 1,2^{\gamma /m}\right\} \left( T^{1/m^{\prime }-\alpha
-\gamma /m}+T^{1/m^{\prime }-\beta -\gamma /m}\right)
I_{1}^{\frac{1}{m}}.
\end{equation*}%
Therefore
\begin{equation}
I_{1}^{\frac{1}{m^{\prime }}}\leq K_{2}\left( T^{1/m^{\prime
}-\alpha -\gamma /m}+T^{1/m^{\prime }-\beta -\gamma /m}\right) ,
\label{nr8.1}
\end{equation}%
with
\begin{equation*}
K_{2}:=\max \left\{ K_{\alpha ,m^{\prime }}^{\frac{1}{m^{\prime
}}},K_{\beta ,m^{\prime }}^{\frac{1}{m^{\prime }}}\right\} \max
\left\{ 1,2^{\gamma /m}\right\} .
\end{equation*}%
Raising both sides of (\ref{nr8.1}) to the power $m^{\prime }$ we obtain%
\begin{equation}
I_{1}\leq K_{3}\left( T^{1-\alpha m^{\prime }-\gamma m^{\prime
}/m}+T^{1-\beta m^{\prime }-\gamma m^{\prime }/m}\right) ,
\label{nr8}
\end{equation}%
with%
\begin{equation*}
K_{3}=2^{1-m^{\prime }}K_{2}^{m^{\prime }}.
\end{equation*}%
If $m<\frac{\gamma +1}{1-\beta }$ we see that $1-\beta m^{^{\prime
}}-\gamma m^{^{\prime }}/m<0,$ $1-\alpha m^{^{\prime }}-\gamma
m^{^{\prime }}/m<0,$ and consequently $T^{1-\beta m^{^{\prime
}}-\gamma m^{^{\prime }}/m}\rightarrow 0$ and $T^{1-\alpha
m^{^{\prime }}-\gamma m^{^{\prime }}/m}\rightarrow 0$ as
$T\rightarrow \infty $. Then, from (\ref{nr8}), we obtain
\begin{equation*}
\underset{T\rightarrow \infty }{\lim I_{1}=}\underset{T\rightarrow \infty }{%
\lim }\int\nolimits_{0}^{T}t^{\gamma }\left\vert y\left( t\right)
\right\vert ^{m}\varphi \left( t\right) dt=0.
\end{equation*}%
We reach a contradiction since the solution is not supposed to be trivial.%
\newline
In the case $m=\frac{\gamma +1}{1-\beta }$ we have $1-\beta
m^{^{\prime }}-\gamma m^{^{\prime }}/m$ $=0,$ $1-\alpha
m^{^{\prime }}-\gamma m^{^{\prime }}/m\leq 0,$\ and the relation
(\ref{nr8}) ensures that
\begin{equation}
\underset{T\rightarrow \infty }{\lim
}\int\nolimits_{0}^{T}t^{\gamma }\left\vert y\left( t\right)
\right\vert ^{m}\varphi \left( t\right) dt\leq K_{4}.  \label{nr9}
\end{equation}%
Further, in view of (\ref{nr2}), (\ref{nr3}) and (\ref{nr6}), we see that%
\begin{equation*}
I_{1}\leq \max \left\{ 1,2^{\gamma /m}\right\} T^{-\gamma
/m}\int\nolimits_{T/2}^{T}t^{\gamma /m}\varphi \left( t\right)
^{1/m}\left\vert y\left( t\right) \right\vert \left[ \left(
I_{T-}^{1-\alpha }\frac{\left\vert \varphi ^{\prime }\right\vert
}{\varphi ^{1/m}}\right) \left( t\right) +\left( I_{T-}^{1-\beta
}\frac{\left\vert \varphi ^{\prime }\right\vert }{\varphi
^{1/m}}\right) \left( t\right) \right] dt.
\end{equation*}%
Thanks to H\"{o}lder's inequality, it is clear that
\begin{equation*}
I_{1}\leq \max \left\{ 1,2^{\gamma /m}\right\} T^{-\gamma
/m}\left[ \int\nolimits_{T/2}^{T}t^{\gamma }\varphi \left(
t\right) \left\vert y\left( t\right) \right\vert ^{m}dt\right]
^{\frac{1}{m}}\text{ \ \ \ \ \ \ \ \ \ \ \ \ \ \ \ \ \ \ \ \ \ \ \
\ }
\end{equation*}%
\begin{equation*}
\times \left\{ \int\nolimits_{T/2}^{T}\left[ \left( I_{T-}^{1-\alpha }\frac{%
\left\vert \varphi ^{\prime }\right\vert }{\varphi ^{1/m}}\right)
\left( t\right) +\left( I_{T-}^{1-\beta }\frac{\left\vert \varphi
^{\prime }\right\vert }{\varphi ^{1/m}}\right) \left( t\right)
\right] ^{m^{^{\prime }}}dt\right\} ^{\frac{1}{m^{^{\prime }}}}
\end{equation*}%
\begin{equation*}
\leq \max \left\{ 1,2^{\gamma /m}\right\} 2^{1/m}T^{-\gamma
/m}\left[ \int\nolimits_{T/2}^{T}t^{\gamma }\varphi \left(
t\right) \left\vert y\left( t\right) \right\vert ^{m}dt\right]
^{\frac{1}{m}}
\end{equation*}%
\begin{equation*}
\times \left\{ \int\nolimits_{T/2}^{T}\left[ \left( I_{T-}^{1-\alpha }\frac{%
\left\vert \varphi ^{\prime }\right\vert }{\varphi ^{1/m}}\right)
^{m^{\prime }}\left( t\right) +\left( I_{T-}^{1-\beta
}\frac{\left\vert \varphi ^{\prime }\right\vert }{\varphi
^{1/m}}\right) ^{m^{\prime }}\left( t\right) \right] dt\right\}
^{\frac{1}{m^{^{\prime }}}}.
\end{equation*}%
Therefore, by Lemma \textbf{\ref{lem:8}}, we obtain%
\begin{equation*}
I_{1}\leq K_{5}T^{-\gamma /m}\left[
\int\nolimits_{T/2}^{T}t^{\gamma
}\varphi \left( t\right) \left\vert y\left( t\right) \right\vert ^{m}dt%
\right] ^{\frac{1}{m}}\left[ K_{\alpha ,m^{\prime }}T^{1-\alpha
m^{\prime
}}+K_{\beta ,m^{\prime }}T^{1-\beta m^{\prime }}\right] ^{\frac{1}{%
m^{^{\prime }}}}
\end{equation*}%
\begin{equation*}
=K_{5}\left[ \int\nolimits_{T/2}^{T}t^{\gamma }\varphi \left(
t\right) \left\vert y\left( t\right) \right\vert ^{m}dt\right]
^{\frac{1}{m}}\left[ K_{\alpha ,m^{\prime }}T^{1-\alpha m^{\prime
}-\gamma m^{^{\prime
}}/m}+K_{\beta ,m^{\prime }}T^{1-\beta m^{\prime }-\gamma m^{^{\prime }}/m}%
\right] ^{\frac{1}{m^{^{\prime }}}},
\end{equation*}%
with%
\begin{equation*}
K_{5}=\max \left\{ 1,2^{\gamma /m}\right\} 2^{1/m}.
\end{equation*}%
Since $m=\frac{\gamma +1}{1-\beta },$ then $1-\beta m^{^{\prime
}}-\gamma m^{^{\prime }}/m$ $=0$ and $1-\alpha m^{^{\prime
}}-\gamma m^{^{\prime
}}/m\leq 0.$ Therefore%
\begin{equation*}
I_{1}\leq K_{6}\left[ \int\nolimits_{T/2}^{T}t^{\gamma }\varphi
\left( t\right) \left\vert y\left( t\right) \right\vert
^{m}dt\right] ^{\frac{1}{m}}
\end{equation*}%
for some positive constant $K_{6}$, with
\begin{equation*}
\underset{T\rightarrow \infty }{\lim
}\int\nolimits_{T/2}^{T}t^{\gamma }\varphi \left( t\right)
\left\vert y\left( t\right) \right\vert ^{m}dt=0
\end{equation*}%
due to the convergence of the integral in (\ref{nr9}). This is
again a contradiction. The proof is complete.
\end{proof}

\ \ \newline
Next, we take $\alpha =1$ and $0<\beta <1,$ that is%
\begin{equation}
\left\{
\begin{array}{l}
y^{\prime }\left( t\right) +D_{0}^{\beta }y\left( t\right) \geq
t^{\gamma
}\left\vert y\left( t\right) \right\vert ^{m},\text{ }t>0,\text{\ }m>1,\text{%
\ \ \ }0<\beta <1, \\
y\left( t\right) |_{t=0}=b\in
\mathbb{R}
.%
\end{array}%
\right.  \label{nr10}
\end{equation}

\begin{theorem}
\textbf{\label{thm:11}}\textit{Assume that\ }$\gamma >-\beta $\textit{\ and }%
$1<m\leq \frac{\gamma +1}{1-\beta }.$\textit{\ Then, Problem
(\ref{nr10}) does not admit global nontrivial solutions\ when
}$b\geq 0.$
\end{theorem}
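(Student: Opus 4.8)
The plan is to re-run the test-function argument of Theorem \ref{thm:10} almost verbatim, the only structural change being that the top-order term is now an ordinary derivative. Assuming, for contradiction, that a nontrivial global solution $y$ exists, I take the test function $\varphi$ of Lemma \ref{lem:8}, multiply the inequality in (\ref{nr10}) by $\varphi$, integrate over $(0,T)$, and set
\begin{equation*}
I_1:=\int_0^T t^\gamma |y(t)|^m\varphi(t)\,dt,\quad I_2:=\int_0^T\varphi(t)y'(t)\,dt,\quad I_3:=\int_0^T\varphi(t)D_0^\beta y(t)\,dt,
\end{equation*}
so that $I_1\le I_2+I_3$.

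For $I_2$ an ordinary integration by parts, together with $\varphi(T)=0$, $\varphi(0)=1$ and $y(0)=b\ge 0$, gives $I_2=-b-\int_0^T\varphi'(t)y(t)\,dt\le \int_{T/2}^T|\varphi'(t)|\,|y(t)|\,dt$, since $\varphi'\equiv 0$ on $[0,T/2]$. Inserting the factors $\varphi^{1/m}/\varphi^{1/m}$ and $t^{\gamma/m}/t^{\gamma/m}$, bounding $t^{-\gamma/m}\le \max\{1,2^{\gamma/m}\}T^{-\gamma/m}$ on $[T/2,T]$, and applying H\"older with exponents $m,m'$, I obtain
\begin{equation*}
I_2\le \max\{1,2^{\gamma/m}\}T^{-\gamma/m}\,I_1^{1/m}\left(\int_{T/2}^T\left(\frac{|\varphi'|}{\varphi^{1/m}}\right)^{m'}dt\right)^{1/m'}.
\end{equation*}
This is exactly the point where Lemma \ref{lem:9} (the $\alpha=1$ analogue announced in Remark \ref{rem:1}) replaces Lemma \ref{lem:8}; it bounds the last factor by a constant times $T^{(1-m')/m'}$ and yields $I_2\le K\,T^{1/m'-1-\gamma/m}I_1^{1/m}$, the $\alpha=1$ specialization of (\ref{nr5}).

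The term $I_3$ is treated verbatim as in Theorem \ref{thm:10}. Here I first need $I_0^{1-\beta}y(0)=0$; since the initial condition prescribes the pointwise value $y(0)=b$, the solution is continuous at the origin, so $y\in C_0[0,T]=C[0,T]$, and because $0<1-\beta$, Lemma \ref{lem:7} supplies the required vanishing. The integration by parts, the same test-function manipulation and H\"older, together with Lemma \ref{lem:8} applied with $\beta$ in place of $\alpha$, then give $I_3\le K_{\beta,m'}^{1/m'}\max\{1,2^{\gamma/m}\}T^{1/m'-\beta-\gamma/m}I_1^{1/m}$, which is precisely (\ref{nr7}).

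Combining the two estimates as in (\ref{nr8.1})--(\ref{nr8}) produces $I_1\le K\left(T^{1-m'-\gamma m'/m}+T^{1-\beta m'-\gamma m'/m}\right)$. Since $\beta<1$ the $\beta$-exponent dominates (it exceeds the other by $m'(1-\beta)>0$), and the hypothesis $m<(\gamma+1)/(1-\beta)$ forces $1-\beta m'-\gamma m'/m<0$, so both powers tend to $0$ and $\int_0^\infty t^\gamma|y|^m\varphi\,dt=0$, contradicting nontriviality. In the borderline case $m=(\gamma+1)/(1-\beta)$ the $\beta$-exponent vanishes while the $\alpha=1$ exponent stays strictly negative, so the integral is merely bounded; I then rerun the H\"older step restricted to $[T/2,T]$ and invoke the convergence of $\int_0^\infty t^\gamma|y|^m\varphi\,dt$ to force $I_1\le K\big[\int_{T/2}^T t^\gamma\varphi|y|^m\,dt\big]^{1/m}\to 0$, reaching the same contradiction. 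The only genuinely new ingredient relative to Theorem \ref{thm:10} is the substitution of Lemma \ref{lem:9} for Lemma \ref{lem:8} in the top-order term; everything else transcribes with $\alpha=1$, and I expect no real obstacle beyond verifying that the resulting exponents behave exactly as in the fractional case.
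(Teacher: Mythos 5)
Your proposal is correct and follows essentially the same route as the paper's own proof: the identical decomposition $I_1\le I_2+I_3$, integration by parts with Lemma \ref{lem:9} for the ordinary-derivative term and Lemma \ref{lem:8} (with $\beta$ in place of $\alpha$) for $I_3$, then the same combination, exponent analysis, and borderline-case argument via the convergence of $\int_0^\infty t^\gamma|y|^m\varphi\,dt$. Your explicit justification of $I_0^{1-\beta}y(0)=0$ via $y\in C[0,T]$ and Lemma \ref{lem:7} is a small but valid elaboration of a step the paper leaves implicit in its phrase ``following procedure as in the proof of Theorem \ref{thm:10}.''
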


\begin{proof}
Assume, on the contrary, that a nontrivial solution $y$ exists for all time $%
t>0.$ Let $\varphi $ be as in Lemma \textbf{\ref{lem:8}}.
Multiplying the inequality in (\ref{nr10}) by $\varphi \left(
t\right) $ and integrating we get
\begin{equation}
J_{1}=\int\nolimits_{0}^{T}t^{\gamma }\left\vert y\left( t\right)
\right\vert ^{m}\varphi \left( t\right) dt\leq
\int_{0}^{T}y^{\prime }\left( t\right) \varphi \left( t\right)
dt+\int_{0}^{T}D_{0}^{\beta }y\left( t\right) \varphi \left(
t\right) dt.  \label{nr11}
\end{equation}%
Let%
\begin{equation}
J_{2}=\int_{0}^{T}\varphi \left( t\right) y^{\prime }\left(
t\right) dt, \label{nr13}
\end{equation}%
and%
\begin{equation}
J_{3}=\int_{0}^{T}\varphi \left( t\right) D_{0}^{\beta }y\left(
t\right) dt. \label{nr14}
\end{equation}%
Following procedure as in the proof of Theorem
\textbf{\ref{thm:10}}, we
obtain the following estimates for $J_{2}$ and $J_{3}$%
\begin{equation}
J_{2}\leq \max \left\{ 1,2^{\gamma /m}\right\} T^{-\gamma
/m}\int\nolimits_{T/2}^{T}\frac{\left\vert \varphi ^{\prime
}\left( t\right) \right\vert }{\varphi \left( t\right)
^{1/m}}\left\vert y\left( t\right) \right\vert \varphi \left(
t\right) ^{1/m}t^{\gamma /m}dt,  \label{nr15}
\end{equation}%
(or By using H\"{o}lder's inequality and Lemma \textbf{\ref{lem:9}})%
\begin{equation}
J_{2}\leq \max \left\{ 1,2^{\gamma /m}\right\} K_{1}T^{1/m^{\prime
}-1-\gamma /m}J_{1}^{\frac{1}{m}},\text{\ \ \ \ }  \label{nr17}
\end{equation}%
and%
\begin{equation}
J_{3}\leq \max \left\{ 1,2^{\gamma /m}\right\} T^{-\gamma
/m}\int\nolimits_{T/2}^{T}\left( I_{T-}^{1-\beta }\frac{\left\vert
\varphi ^{\prime }\right\vert }{\varphi ^{1/m}}\right) \left(
t\right) t^{\gamma /m}\varphi \left( t\right) ^{1/m}\left\vert
y\left( t\right) \right\vert dt, \label{nr18}
\end{equation}%
(or By using H\"{o}lder's inequality and Lemma
\textbf{\ref{lem:8}})
\begin{equation}
J_{3}\leq \max \left\{ 1,2^{\gamma /m}\right\} K_{\beta ,m^{\prime }}^{\frac{%
1}{m^{\prime }}}T^{1/m^{\prime }-\beta -\gamma
/m}J_{1}^{\frac{1}{m}}. \label{nr19}
\end{equation}%
From (\ref{nr11}), (\ref{nr17}) and (\ref{nr19}), we have
\begin{equation}
J_{1}^{\frac{1}{m^{\prime }}}\leq K_{2}\left( T^{1/m^{\prime
}-1-\gamma /m}+T^{1/m^{\prime }-\beta -\gamma /m}\right) ,
\label{nr20}
\end{equation}%
with
\begin{equation*}
K_{2}:=\max \left\{ 1,2^{\gamma /m}\right\} \max \left\{ K_{\beta
,m^{\prime }}^{\frac{1}{m^{\prime }}},K_{1}\right\} \text{ }.
\end{equation*}%
Raising both sides of (\ref{nr20}) to the power $m^{\prime }$ we obtain%
\begin{equation}
J_{1}\leq K_{3}\left( T^{1-m^{\prime }-\gamma m^{\prime
}/m}+T^{1-\beta m^{\prime }-\gamma m^{\prime }/m}\right) ,
\label{nr21}
\end{equation}%
with%
\begin{equation*}
K_{3}=2^{1-m^{\prime }}K_{2}^{m^{\prime }}.
\end{equation*}%
If $m<\frac{\gamma +1}{1-\beta }$ we see that $1-m^{\prime
}-\gamma m^{^{\prime }}/m<0,$ $1-\beta m^{^{\prime }}-\gamma
m^{^{\prime }}/m<0$. Then from (\ref{nr21}) we obtain
\begin{equation*}
\underset{T\rightarrow \infty }{\lim J_{1}=}\underset{T\rightarrow \infty }{%
\lim }\int\nolimits_{0}^{T}t^{\gamma }\left\vert y\left( t\right)
\right\vert ^{m}\varphi \left( t\right) dt=0.
\end{equation*}%
We reach a contradiction since the solution is not supposed to be trivial.%
\newline
In the case $m=\frac{\gamma +1}{1-\beta }$ we have $1-m^{\prime
}-\gamma m^{^{\prime }}/m$ $\leq 0,$ $1-\beta m^{^{\prime
}}-\gamma m^{^{\prime }}/m=0,$\ and the relation (\ref{nr21})
ensures that
\begin{equation}
\underset{T\rightarrow \infty }{\lim
}\int\nolimits_{0}^{T}t^{\gamma }\left\vert y\left( t\right)
\right\vert ^{m}\varphi \left( t\right) dt\leq K_{4}.
\label{nr22}
\end{equation}%
Also from (\ref{nr11}), (\ref{nr15}) and (\ref{nr18}), we have%
\begin{equation*}
J_{1}\leq \max \left\{ 1,2^{\gamma /m}\right\} T^{-\gamma
/m}\int\nolimits_{T/2}^{T}t^{\gamma /m}\varphi \left( t\right)
^{1/m}\left\vert y\left( t\right) \right\vert \left[
\frac{\left\vert \varphi ^{\prime }\left( t\right) \right\vert
}{\varphi \left( t\right) ^{1/m}}+\left( I_{T-}^{1-\beta
}\frac{\left\vert \varphi ^{\prime }\right\vert }{\varphi
^{1/m}}\right) \left( t\right) \right] dt.
\end{equation*}%
By using H\"{o}lder's inequality, it is clear that
\begin{equation*}
J_{1}\leq \max \left\{ 1,2^{\gamma /m}\right\} T^{-\gamma
/m}\left[ \int\nolimits_{T/2}^{T}t^{\gamma }\varphi \left(
t\right) \left\vert y\left( t\right) \right\vert ^{m}dt\right]
^{\frac{1}{m}}\text{ \ \ \ }
\end{equation*}%
\begin{equation*}
\times \left[ \int\nolimits_{T/2}^{T}\left[ \frac{\left\vert
\varphi
^{\prime }\left( t\right) \right\vert }{\varphi \left( t\right) ^{1/m}}%
+\left( I_{T-}^{1-\beta }\frac{\left\vert \varphi ^{\prime }\right\vert }{%
\varphi ^{1/m}}\right) \left( t\right) \right] ^{m^{^{\prime }}}dt\right] ^{%
\frac{1}{m^{^{\prime }}}}.
\end{equation*}%
Therefore, by Lemma \textbf{\ref{lem:8}} and Lemma \textbf{\ref{lem:9}} and $%
\varphi \in C^{1}[0,\infty ),$ we have%
\begin{equation*}
J_{1}\leq \max \left\{ 1,2^{\gamma /m}\right\} T^{-\gamma
/m}\left[ \int\nolimits_{T/2}^{T}t^{\gamma }\varphi \left(
t\right) \left\vert y\left( t\right) \right\vert ^{m}dt\right]
^{\frac{1}{m}}\text{ \ \ }\left[ K_{5}T^{1-m^{\prime
}}+K_{6}T^{1-\beta m^{\prime }}\right] ^{1/m^{\prime }},
\end{equation*}%
for some positive constants $K_{5}$ and $K_{6},$ and then%
\begin{equation*}
J_{1}\leq \max \left\{ 1,2^{\gamma /m}\right\} \left[ \int%
\nolimits_{T/2}^{T}t^{\gamma }\varphi \left( t\right) \left\vert
y\left( t\right) \right\vert ^{m}dt\right] ^{\frac{1}{m}}\text{ \
\ }\left[ K_{5}T^{1-m^{\prime }-\gamma m^{^{\prime
}}/m}+K_{6}T^{1-\beta m^{\prime }-\gamma m^{^{\prime }}/m}\right]
^{1/m^{\prime }}
\end{equation*}%
\begin{equation*}
\leq \max \left\{ 1,2^{\gamma /m}\right\} \left[ \int\nolimits_{T/2}^{T}t^{%
\gamma }\varphi \left( t\right) \left\vert y\left( t\right)
\right\vert ^{m}dt\right] ^{\frac{1}{m}}\left[ K_{5}T^{1-m^{\prime
}-\gamma m^{^{\prime }}/m}+K_{6}\right] ^{1/m^{\prime }},
\end{equation*}%
and%
\begin{equation*}
\underset{T\rightarrow \infty }{\lim
}\int\nolimits_{T/2}^{T}t^{\gamma }\varphi \left( t\right)
\left\vert y\left( t\right) \right\vert ^{m}dt=0
\end{equation*}%
due to the convergence of the integral in (\ref{nr22}). This is
again a contradiction and the proof of Theorem
\textbf{\ref{thm:11}} is complete.
\end{proof}

\ \ \newline
Finally, we take $\alpha =\beta =1,$ this mean we consider the Cauchy problem%
\begin{equation}
\left\{
\begin{array}{l}
y^{\prime }\left( t\right) \geq t^{\gamma }\left\vert y\left(
t\right)
\right\vert ^{m},\text{ }t>0,\text{\ \ \ }m>1, \\
y\left( t\right) |_{t=0}=b\in
\mathbb{R}
.%
\end{array}%
\right.  \label{nr23}
\end{equation}%
\newline

\begin{theorem}
\textbf{\label{thm:12}}\textit{Assume that\ }$\gamma >-1$\textit{\ and }$%
m>1. $\textit{\ Then, Problem (\ref{nr23}) does not admit global
nontrivial solutions\ when }$b\geq 0.$
\end{theorem}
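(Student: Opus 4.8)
The plan is to follow the test-function scheme of Theorems~\ref{thm:10} and~\ref{thm:11}, observing that (\ref{nr23}) is exactly the limiting case $\alpha=\beta=1$ of the previous problem, in which only the single ordinary derivative $y'$ survives and no fractional term remains to be estimated. Assume for contradiction that a nontrivial solution $y$ exists for all $t>0$, and let $\varphi$ be the test function of Lemma~\ref{lem:8}. Multiplying the inequality in (\ref{nr23}) by $\varphi(t)\geq 0$ and integrating over $(0,T)$ gives
\begin{equation*}
J_{1}:=\int_{0}^{T}t^{\gamma}\left\vert y(t)\right\vert^{m}\varphi(t)\,dt\leq \int_{0}^{T}\varphi(t)y^{\prime}(t)\,dt=:J_{2},
\end{equation*}
so the whole argument reduces to bounding the single quantity $J_{2}$.

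Next I would integrate $J_{2}$ by parts. Since $\varphi(T)=0$, $\varphi(0)=1$ and $y(0)=b\geq 0$, the boundary contribution is $-b\leq 0$, whence
\begin{equation*}
J_{2}=-b-\int_{0}^{T}\varphi^{\prime}(t)y(t)\,dt\leq \int_{0}^{T}\left\vert \varphi^{\prime}(t)\right\vert \left\vert y(t)\right\vert\,dt=\int_{T/2}^{T}\left\vert \varphi^{\prime}(t)\right\vert \left\vert y(t)\right\vert\,dt,
\end{equation*}
the last equality because $\varphi^{\prime}\equiv 0$ on $[0,T/2]$. Inserting the factors $\varphi^{1/m}/\varphi^{1/m}$ and $t^{\gamma/m}/t^{\gamma/m}$ and using $t^{-\gamma/m}\leq \max\{1,2^{\gamma/m}\}T^{-\gamma/m}$ on $[T/2,T]$ produces exactly (\ref{nr15}). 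Applying H\"older's inequality with exponents $m$ and $m^{\prime}$, grouping $t^{\gamma/m}\varphi^{1/m}\left\vert y\right\vert$ against $\left\vert \varphi^{\prime}\right\vert/\varphi^{1/m}$, and then invoking Lemma~\ref{lem:9} (with $p=1/m$ and exponent $m^{\prime}$) to bound $\int_{T/2}^{T}(\left\vert \varphi^{\prime}\right\vert/\varphi^{1/m})^{m^{\prime}}\,dt\leq \frac{1}{2}K_{1}^{m^{\prime}}T^{1-m^{\prime}}$, gives the estimate (\ref{nr17}), namely
\begin{equation*}
J_{1}\leq J_{2}\leq C\,T^{1/m^{\prime}-1-\gamma/m}J_{1}^{1/m}.
\end{equation*}

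Dividing and raising to the power $m^{\prime}$ then yields $J_{1}\leq C^{\prime}T^{1-m^{\prime}-\gamma m^{\prime}/m}$, and the decisive step is to check the sign of the exponent. Using $m^{\prime}=m/(m-1)$,
\begin{equation*}
1-m^{\prime}-\frac{\gamma m^{\prime}}{m}=1-\frac{m+\gamma}{m-1}=-\frac{1+\gamma}{m-1}.
\end{equation*}
Since $\gamma>-1$ and $m>1$, this exponent is \emph{strictly} negative, so $T^{1-m^{\prime}-\gamma m^{\prime}/m}\to 0$ as $T\to\infty$ and hence $J_{1}\to 0$. Because $\varphi\equiv 1$ on $[0,T/2]$ and the integrand is nonnegative, letting $T\to\infty$ forces $\int_{0}^{\infty}t^{\gamma}\left\vert y(t)\right\vert^{m}\,dt=0$, so $y\equiv 0$, contradicting nontriviality. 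The one point worth flagging is that, in contrast with Theorems~\ref{thm:10} and~\ref{thm:11}, there is no critical case to handle separately: the upper bound $(\gamma+1)/(1-\beta)$ on $m$ degenerates to $+\infty$ as $\beta\to 1$, so for every admissible pair $(\gamma,m)$ the exponent is strictly negative and a single H\"older estimate closes the argument, with no need for the refined second estimate used earlier at the endpoint $m=(\gamma+1)/(1-\beta)$.
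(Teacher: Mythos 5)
Your proof is correct and takes exactly the approach the paper intends: the paper's own proof of Theorem~\ref{thm:12} is stated only as ``similar to the proof of Theorem~\ref{thm:10},'' and your argument is the natural instantiation of that scheme (in effect the proof of Theorem~\ref{thm:11} with the fractional term $J_{3}$ absent and Lemma~\ref{lem:9} supplying the bound on $\int_{T/2}^{T}\bigl(\left\vert\varphi^{\prime}\right\vert/\varphi^{1/m}\bigr)^{m^{\prime}}dt$). Your exponent computation $1-m^{\prime}-\gamma m^{\prime}/m=-(1+\gamma)/(m-1)<0$ and the remark that no endpoint case analogous to $m=(\gamma+1)/(1-\beta)$ survives, so the refined second estimate of the earlier proofs is unnecessary, are both accurate.
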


\begin{proof}
Similar to the proof of Theorem \textbf{\ref{thm:10}}.
\end{proof}

\begin{conclusion}
According to Theorems \textbf{\ref{thm:10}}, \textbf{\ref{thm:11}}
and having in mind the results in \cite{Laskri-Tatar(2010)} it
appears that the addition of the term $D_{0}^{\beta }y,$ $\beta
<\alpha ,$ does not prevent the nonexistence. However, it does
affect the exponent $m$. The range of $m$
is reduced to $1<m\leq \frac{\gamma +1}{1-\beta }$ instead of $1<m\leq \frac{%
\gamma +1}{1-\alpha }$. This shows that the range does not depend
on the highest derivative. It depends on the lowest derivative.
This is a well-established result for the Telegraph equation.
Indeed, for this problem, it has been proved that solutions
approach solutions of the corresponding parabolic part.\newline In
case $m$ is fixed from the beginning then we need $\gamma >m\left(
1-\beta \right) -1$ instead of $\gamma >m\left( 1-\alpha \right)
-1.$
Therefore, it is the derivative of lower order which determines the exponent.%
\newline
Note that%
\begin{equation*}
1<m\leq \frac{\gamma +1}{1-\beta }<\frac{\gamma +1}{1-\alpha },
\end{equation*}%
and
\end{conclusion}

\begin{equation*}
\gamma >-\beta >-\alpha .
\end{equation*}%
\textbf{Acknowledgement.} The authors would like to acknowledge
the support provided by King Fahd University of Petroleum and
Minerals (KFUPM) through project number IN151035.

\end{document}